\newcommand{\llim}[3]{\ensuremath{ \lim\limits_{ #1 \rightarrow #2} #3 }}
\newcommand{\fclass}[2]{\ensuremath{  \mathbb{#1}^{\, #2} }}
\newtheorem{proposition}{Proposition}
\newtheorem{definition}{Definition}
\newtheorem{theorem}{Theorem}
\newtheorem{corollary}{Corollary}
\newtheorem{remark}{Remark}
\definecolor{dkgreen}{rgb}{0,0.6,0}
\definecolor{gray}{rgb}{0.5,0.5,0.5}
\definecolor{mauve}{rgb}{0.20 , 0.40, 1.0}
\lstdefinelanguage{Maxima}
{morekeywords={allbut, and, block, break, buildq, do, else, elseif, error, errcatch, for,   get, go, if, in, is, lambda, local, 
		matchdeclare, new, not, or, step, then, thru, tellsimp, tellsimpafter, unless, while, return},
	sensitive=false,
	comment=[s]{/*}{*/},
	morestring=[b]"
}
\begin{document}
	\title[Regularized integral representations of the reciprocal $\Gamma$ function] {Regularized integral representations of the reciprocal $\Gamma$ function}
	\author {Dimiter Prodanov }
	\address{Correspondence: Environment, Health and Safety, IMEC vzw, Kapeldreef 75, 3001 Leuven, Belgium; e-mail: Dimiter.Prodanov@imec.be, dimiterpp@gmail.com}
\begin{abstract}
	 This paper establishes a real integral representation of the reciprocal $\Gamma$ function in terms of a regularized hypersingular integral.
	 The equivalence with the usual complex representation is demonstrated.
	 A regularized complex representation along the usual Hankel path is derived.
\end{abstract}

\maketitle
\textit{Key words} Gamma function; Reciprocal Gamma function;  integral equation
\textit{MSC 2010}: Primary  33B15;  Secondary   65D20, 30D10;

\section{Introduction}\label{se:intro}

Applications of the Gamma function are ubiquitous in fractional calculus and the special function theory.
It has numerous interesting properties summarized in \cite{Borwein2018}.
The history of the Gamma function is surveyed in \cite{Davis1959}.
In a previous note I have investigated an approach to regularize derivatives at points, where the ordinary limit diverges \cite{Prodanov2016a}. 
This paper exploits the same approach for the purposes of numerical computation of singular integrals, such as the Euler $\Gamma$ integrals for negative arguments.
The paper also builds on an observations in  \cite{Mainardi2010a}.

The present paper proves a real singular integral representation of the reciprocal $\Gamma$ function. 
The algorithm is implemented in the Computer Algebra System Maxima for reference and demonstration purposes.

As a second contribution, the paper provides an integral representation of the Gamma function for negative numbers.
Finally, the paper demonstrates an equivalent regularized complex representation based on the regularization of the Heine integral.

\section{Preliminaries and notation}\label{sec:notation}

The reciprocal Gamma function is an entire function.
Starting from the Euler's infinite product definition, the reciprocal Gamma function can be defined by the infinite product:
\[
\frac{1}{\Gamma(z)}:= \llim{n}{\infty}{} \frac{z \left( z+1 \right) \ldots  \left( z+n \right)  }{ n^z \, n!}
\]
Proceeding from the Euler's reflection formula for negative arguments the reciprocal Gamma function is simply
\begin{equation}\label{eq:recgammaneg}
\frac{1}{\Gamma(-z)} = -\frac{ \sin{\pi z}}{\pi} \Gamma (z+1)
\end{equation}
It is plot is presented in Fig. \ref{fig:recgamaneg}.

\begin{figure}[hpt]
	\centering
	\includegraphics[width=0.7\linewidth]{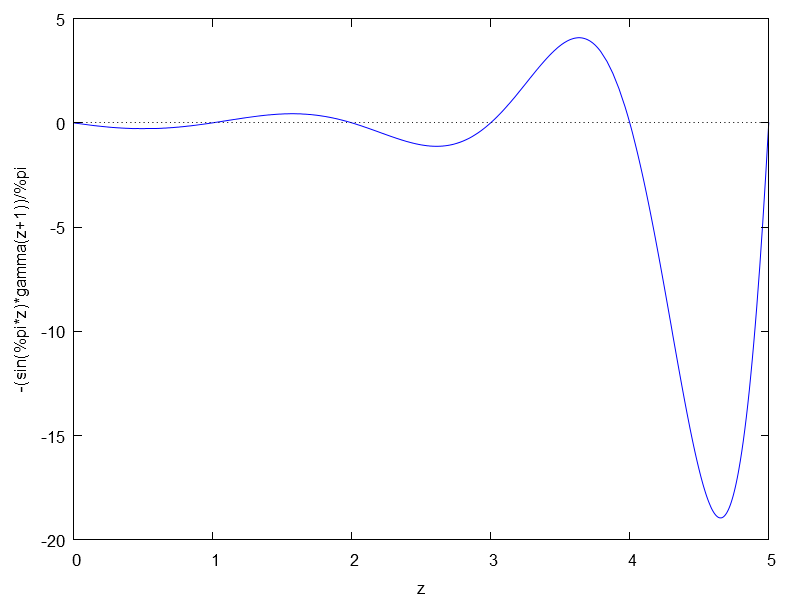}
	\caption{$1/\Gamma (-z)$ computed from Eq. \ref{eq:recgammaneg}}
	\label{fig:recgamaneg}
\end{figure}
\begin{figure}[hpt]
	\centering
	\includegraphics[width=0.5\linewidth]{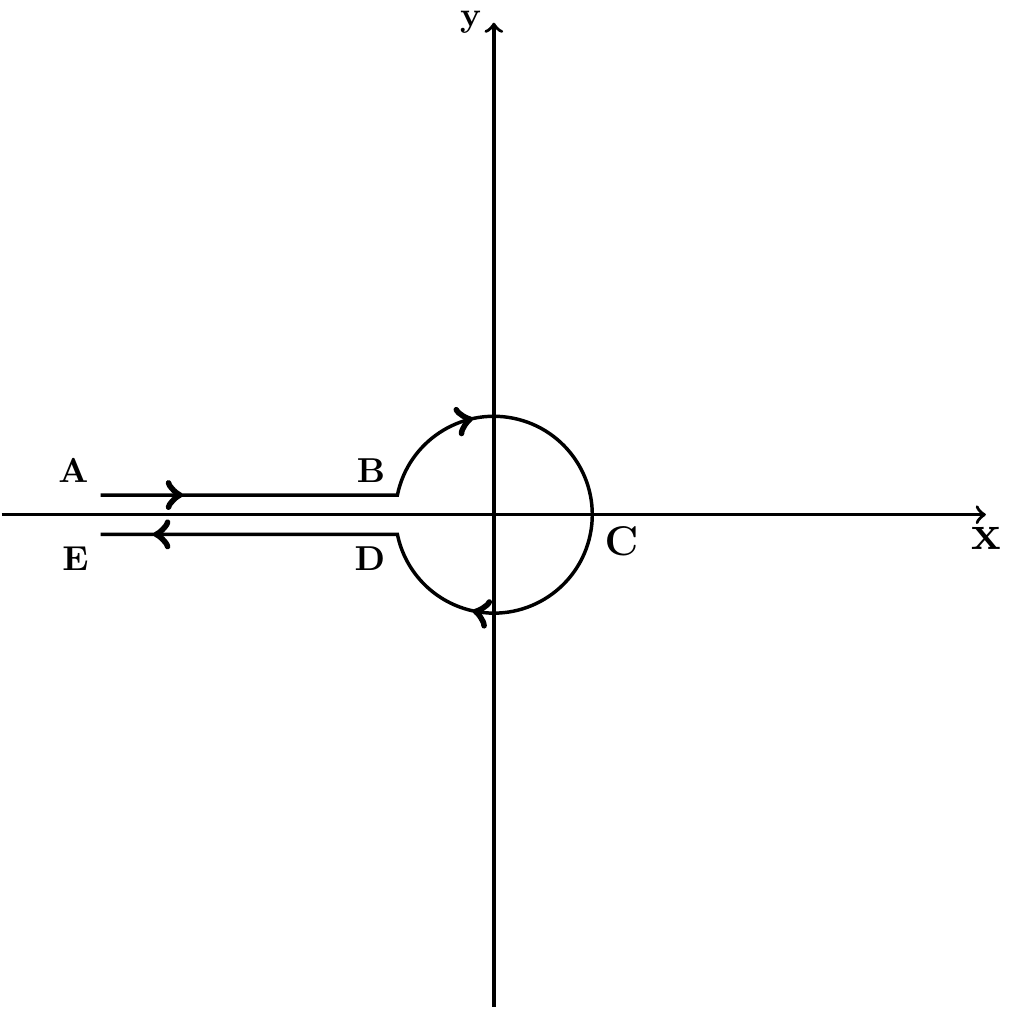}
	\caption{The Hankel contour  $Ha^{-}$}
	\label{fig:hankel}
\end{figure}

The Euler's Gamma function integral representation is valid for real $z>0$ or complex numbers, such that  $ Re \ z > 0$
\[
\Gamma (z)  =  \int_{0}^{\infty}  e^{-\tau}\tau^{z-1}\,  d\tau
\]
however for negative $z$ the integral diverges.
A less well-known integral representations for $Re z <0$ is  the Cauchy–Saalsch\"utz integral \cite{Temme1996}[Ch. 3]
\[
\Gamma (-z)=\int_{0}^{\infty} \frac{ e^{-\tau} - e_{n}(-\tau)}{\tau^{z+1}}  d \tau
\]

The Heine’s  complex representation for the reciprocal Gamma function is well known and is given below:
\[
\frac{1}{\Gamma (z)} =\frac{(-1)^{-z}}{2 \pi i}    \int_{Ha^{+}} \frac{e^{-\tau}}{\tau^z}\,  d\tau = \frac{ 1}{2 \pi i}  \int_{Ha^{-}} \frac{e^{\tau}}{\tau^z}\,  d\tau 
\]
Here ${  Ha^{-}}$ denotes the Hankel contour in the complex $\zeta$-plane with a cut along the negative real semi-axis $\arg \zeta =\pi$ and ${  Ha^{+}}$ is its reflection.
The contour is depicted in Fig. \ref{fig:hankel}.
The integrand has a simple pole at $\tau=0$.
The H\"older exponent at 0  can be computed in the closed interval $[0, \epsilon]$ as 
\[
  \llim{\epsilon}{0}{ }  \frac{\log e^{\epsilon} \epsilon^{-z}}{\log \epsilon } = -z + \llim{\epsilon}{0}{ }  \frac{\epsilon}{\log \epsilon}=-z
\]
Therefore, for $k> [z]$ it holds that 
$
 \llim{\epsilon}{0}{\epsilon^k e^{\epsilon} \epsilon^{-z} } = 0
$
and the order of the residue is $[z]$.
This observation is indicative for the statement of the main result of the paper. 

\subsection{Auxiliary notation}\label{sec:auxnot}
\begin{definition}
	For a real number $z$ the notation $[z]$ will mean the integral part of the number, while 
	$\{ z\}:= z- [z]  $ will denote the non-integral remaining part. 
\end{definition}

\begin{definition}\label{def:fallfact}
	The falling factorial is defined as
	\[
	(z)_n :=\prod_{k=0}^{n-1}  z-k 
	\]	
\end{definition}

\begin{definition}\label{def:truncexp}
	Let
	\[
	e_n (x):= \sum_{k=0}^{n} \frac{x^k}{k!} 
	\]
	be the truncated exponent polynomial sum under the convention $e_{-1}(x)=0$. 
\end{definition}

\section{Theoretical Results}\label{sec:results}

\begin{theorem}[Real Reciprocal Gamma representation]\label{th:gammarec}
	Let
	$z>0$, $z \notin \fclass{Z}{} $ and $n=[z]$. Then
	\[
	\frac{1}{\Gamma(z)}=   \frac{ \sin{\pi z}}{\pi} \int_{0}^{\infty} \frac{ e^{-x} - e_{n-1}(-x)}{x^{z}}  dx =
	Im \; \frac{ 1}{\pi} \int^{0}_{-\infty} \frac{ e^{x} - e_{n-1}(x)}{x^{z}}  dx 
	\]
	where the integrals are over the real axis.
\end{theorem}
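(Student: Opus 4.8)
The plan is to reduce the statement to the Euler reflection identity together with the Cauchy--Saalsch\"utz integral quoted in Section~\ref{sec:notation}. Rewriting the reflection formula \eqref{eq:recgammaneg} (with $z \mapsto -z$) in the form $1/\Gamma(z) = (\sin \pi z/\pi)\,\Gamma(1-z)$, the first claimed equality becomes equivalent to the regularized representation
\[
\Gamma(1-z) = \int_{0}^{\infty} \frac{e^{-x} - e_{n-1}(-x)}{x^{z}}\,dx .
\]
Setting $s = 1-z$, the hypothesis $z \in (n,n+1)$ with $n=[z]$ places $s$ in the strip $-n < s < -n+1$, which is exactly the range in which this formula with $n-1$ subtracted Taylor terms converges. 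So I would first verify, using the convention $e_{-1}=0$, that with $m=n-1$ the integrand behaves like $x^{s+m}$ near $0$ and like $x^{s-1+m}$ near $\infty$, giving absolute convergence precisely on this strip.

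Next I would establish this representation directly, by induction on the number of subtracted terms, rather than merely cite it. Writing $G_m(s) = \int_0^\infty x^{s-1}\bigl(e^{-x} - e_m(-x)\bigr)\,dx$, a single integration by parts using $\tfrac{d}{dx}e_m(-x) = -e_{m-1}(-x)$ and $\tfrac{d}{dx}e^{-x} = -e^{-x}$ gives
\[
\frac{d}{dx}\bigl(e^{-x} - e_m(-x)\bigr) = -\bigl(e^{-x} - e_{m-1}(-x)\bigr),
\]
while the boundary terms $x^{s}\bigl(e^{-x}-e_m(-x)\bigr)$ vanish at both endpoints on the strip by the asymptotics above. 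This produces the recursion $G_m(s) = \tfrac{1}{s}\,G_{m-1}(s+1)$, which is the functional equation of $\Gamma$. Iterating $m+1$ times lifts the argument into the half-plane $\mathrm{Re}\,(s) > 0$, where $G_{-1}(s) = \Gamma(s)$ is the ordinary Euler integral; the telescoping product $s(s+1)\cdots(s+m)$ then yields $G_m(s) = \Gamma(s)$, establishing the representation for $s=1-z$, $m=n-1$.

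For the second equality I would pass to the negative real axis by the substitution $x \mapsto -x$, choosing the branch $\arg x = -\pi$ consistent with the lower bank of the Hankel cut of Fig.~\ref{fig:hankel}, so that $x^{-z} = u^{-z}e^{i\pi z}$ for $x=-u$, $u>0$. This turns $\int_{-\infty}^{0}\bigl(e^{x}-e_{n-1}(x)\bigr)x^{-z}\,dx$ into $e^{i\pi z}\,\Gamma(1-z)$; since $\Gamma(1-z)$ is real, taking $\tfrac{1}{\pi}$ times the imaginary part recovers $(\sin\pi z/\pi)\,\Gamma(1-z)$, matching the first expression.

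The main obstacle is analytic rather than algebraic: one must justify the vanishing of the boundary terms and the interchange of differentiation and integration uniformly on the strip, and above all fix the branch of $x^{-z}$ on the negative axis so that the imaginary part carries the correct sign, since a wrong branch choice replaces $\sin\pi z$ by $-\sin\pi z$. I would also note that this computation is the real-axis shadow of collapsing the Heine/Hankel contour of Section~\ref{sec:notation} onto the two banks of the cut: the factor $e^{i\pi z}-e^{-i\pi z}=2i\sin\pi z$ arises from the two banks, while the subtraction of $e_{n-1}$ is precisely what annihilates the divergent small-circle contribution at the origin, consistent with the H\"older exponent $-z$ and residue order $[z]$ remarked upon earlier.
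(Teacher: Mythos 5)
Your proof is correct, and it reorganizes the argument in a way that differs usefully from the paper's. The paper works directly with the negative-axis integral $I_{n}(z)=\int_{-\infty}^{0}(e^{x}-e_{n-1}(x))x^{-z}\,dx$, derives the recursion $I_{n+1}(z+1)=\tfrac{1}{z}I_{n}(z)$ by parts, reduces to the base case $I_{0}(\{z\})$, evaluates that by rotating onto the positive axis (picking up a phase $e^{\pm i\pi\{z\}}$), and only then multiplies by $\Gamma(\{z\})$ and applies the reflection formula, extracting the real first equality from the imaginary part of a complex identity at the very last step. You instead prove the real equality first: you establish the Cauchy--Saalsch\"utz continuation $G_{n-1}(1-z)=\Gamma(1-z)$ on the strip $-n<1-z<1-n$ by the same integration-by-parts recursion (now entirely on the positive real axis, with no complex phases), and then obtain $1/\Gamma(z)=(\sin\pi z/\pi)\,\Gamma(1-z)$ by reflection; the negative-axis form follows as a one-line corollary via the branch rotation $x^{-z}=u^{-z}e^{i\pi z}$. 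The core mechanism (IBP recursion lifting the argument into the half-plane of convergence of the Euler integral, plus the reflection formula) is the same, but your ordering keeps the first identity purely real and confines the branch bookkeeping to the second identity, whereas the paper carries the phase through the whole reduction and also juggles $\{z\}$ versus $z$. Your emphasis on fixing the branch is well placed: with the paper's choice $(-x)^{-\{z\}}=x^{-\{z\}}e^{-i\pi\{z\}}$ one gets $\mathrm{Im}\,\Gamma(\{z\})I_{n+1}(z+1)=-\pi/(z)_{n}$, i.e.\ the opposite sign, so the stated conclusion actually requires the branch $e^{+i\pi\{z\}}$ (equivalently $\arg x=-\pi$ on the cut), which is exactly the choice you make and justify. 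The only items you should still write out in full are the uniform convergence justifying the vanishing of the boundary terms at each stage of the recursion (which you have correctly identified via the asymptotics $x^{s+m}$ at $0$ and $x^{s+m-1}$ at $\infty$) and the verification that each intermediate $G_{m-1}(s+1)$ converges on the same strip; both are routine.
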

\begin{proof}
	First, we establish two preliminary results.
	Consider the following limit of the form $0/0$ and apply  \textit{n} times l'H\^opital s'rule:
	\[
	L_z = \llim{x}{0}{} \frac{e^{x} - e_n(x)}{x^z} = \frac{1}{(z)_n}\llim{x}{0}{} \frac{e^{x} -1 }{  x^{z-n}}\]
	Another application of l'H\^opital s'rule leads to
	\[
	L_z =   \frac{1}{(z)_n \, (z-n)}\llim{x}{0}{}  e^{x}  x^{n+1 -z}
	\]
	Therefore,
	\[
	L_z = \left\lbrace 
	\begin{array}{ll}
	0, &   z< n+1\\
	\frac{1}{\Gamma(n+1)}, &  z=n +1 \\
	\infty, & z > n+1
	\end{array}
	\right.
	\] 
	Secondly, consider the limit
	\[
	M_z= \llim{x}{-\infty}{} \frac{e^{x} - e_n(x)}{x^z} =
	M_z= \llim{x}{-\infty}{} \frac{e^{x}}{x^z} - \sum_{k=0}^n \llim{x}{-\infty}{} \frac{ x^{k-z} }{\Gamma(k+1) } 
	\]
	Therefore,
	\[
	M_z = \left\lbrace 
	\begin{array}{ll}
	\infty, &   z< n\\
	\frac{1}{\Gamma(n+1)}, &  z=n  \\
	0, & z > n
	\end{array}
	\right.
	\] 
	Therefore, in order for both limits to vanish simultaneously $ n < z <n+1$. 
	Therefore, $n = [z]$. Let $\{z\} = z- [z]$.
	Then 
	\begin{multline*}
	I_{n+1} (z+1) =  \int^{0}_{-\infty} \frac{ e^{x} - e_n(x)}{x^{z+1}}  dx =
	\left. \frac{e^{x} - e_n(x) } {z x^{z}} \right|^{\infty}_{0} + \frac{1}{z} \int^{0}_{-\infty}  \frac{ \left( e^{x} - e_n(x)\right)^\prime }{x^{z}}  dx = \\ 
	\frac{1}{z} \int^{0}_{-\infty}  \frac{   e^{x} - e_{n-1}(x)  }{x^{z}}  dx = \frac{1}{z} I_{n} (z) 
	\end{multline*}
	by the above results.
	Therefore, by reduction
	\begin{multline*}
	I_{n+1} (z+1) = \frac{1}{(z)_{n}} I_0 (z- n) = \frac{1}{(z)_n} I_0 (\{z\})=
	\frac{1}{(z)_{n}} \int^{0}_{-\infty}   \frac{e^{x}}{x^{\{z\}}}  dx = \\
	\frac{1}{(z)_{n}} \int_{0}^{\infty}   \frac{e^{-x}}{(-x)^{\{z\}}} dx  =
	e^{-i \pi \{z\}  }\frac{\Gamma (1- \{z\})}{(z)_{n}}
	\end{multline*}
	Therefore,
	\begin{multline*}
	\Gamma ( \{z\}) I_{n+1} (z+1) = \frac{e^{-i \pi \{z\}  }}{(z)_{n}} \Gamma (1- \{z\}) \Gamma ( \{z\})  = \\ e^{-i \pi \{z\}  } \frac{\pi }{(z)_{n} \sin{\pi \{z\} }} = \frac{\pi }{(z)_{n}} (\cot{\pi \{z\}  }- i )
	\end{multline*}
	by the Euler's reflection formula.
	We take the imaginary part of the integral since $\Gamma(\{z\})$ is real and the middle expression is imaginary.
	Therefore,
	\[
	\frac{1}{\Gamma(z+1)}=\frac{1 }{ (z)_{n} \Gamma ( \{z\})} =  Im \frac{1}{\pi} I_{n+1} (z+1)
	\] 
	Finally,
	\[
	\frac{1}{\Gamma(z)}  = Im \frac{1}{\pi}   \int^{0}_{-\infty}  \frac{   e^{x} - e_{n-1}(x)  }{x^{z}} dx
	\]
\end{proof}

\begin{corollary}
	By change of variables it holds that 
	\[
	\frac{1}{\Gamma(z)}  =	\frac{\sin{\pi z}}{\pi z} \int_{0}^{\infty }{\left. {{u}^{\frac{1}{z}-2}}\, \left({  e}^{-{u}^{\frac{1}{z}}}-  \sum_{k=0}^{n-1}{\left. \frac{{{\left( -1\right) }^{k}}\, {{u}^{\frac{k}{z}}}}{k!}\right.} \right) \, du\right.}
	\]
\end{corollary}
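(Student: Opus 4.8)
The plan is to obtain the claimed formula directly from the first integral representation in Theorem \ref{th:gammarec} by a single change of variables, namely $x = u^{1/z}$ (equivalently $u = x^{z}$), which for $z > 0$ is a smooth increasing bijection of $(0,\infty)$ onto itself, so that the integration limits $0$ and $\infty$ are preserved. First I would record the differential: from $u = x^{z}$ one has $du = z\,x^{z-1}\,dx$, hence $dx = \frac{1}{z}\,u^{1/z - 1}\,du$, while the denominator transforms as $x^{z} = u$.

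Next I would rewrite the numerator. Since $e_{n-1}(-x) = \sum_{k=0}^{n-1}\frac{(-1)^k x^{k}}{k!}$, the substitution gives $e^{-x} = e^{-u^{1/z}}$ and $x^{k} = u^{k/z}$, so the regularized difference becomes exactly $e^{-u^{1/z}} - \sum_{k=0}^{n-1}\frac{(-1)^k u^{k/z}}{k!}$. Combining $x^{-z} = u^{-1}$ with $dx = \frac{1}{z}\,u^{1/z-1}\,du$ collapses the three $u$-powers into the single factor $u^{1/z - 2}$, and the extra constant $\frac{1}{z}$ produced by the differential converts the prefactor $\frac{\sin \pi z}{\pi}$ into $\frac{\sin \pi z}{\pi z}$, which is precisely the asserted expression.

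The one point that needs care — and which I regard as the main, if modest, obstacle — is that the change of variables must be applied to the \emph{regularized} integrand as a single entity, since the constituent pieces $\int \frac{e^{-x}}{x^{z}}\,dx$ and the monomial integrals $\int x^{k-z}\,dx$ generally diverge on their own in the relevant range $n < z < n+1$. I would therefore verify that the transformed integral converges at both endpoints before any splitting or rearrangement. Near $u = 0$ the bracket behaves like $\frac{(-1)^{n}}{n!}\,u^{n/z}$, so the integrand is $O\bigl(u^{(n+1)/z - 2}\bigr)$, and the exponent exceeds $-1$ because $z < n+1$; near $u = \infty$ the leading polynomial term dominates and the integrand is $O\bigl(u^{n/z - 2}\bigr)$, with exponent below $-1$ because $z > n$. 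Both estimates mirror the convergence already established for the original integral in Theorem \ref{th:gammarec}, so the substitution is legitimate and the stated identity follows.
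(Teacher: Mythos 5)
Your substitution $u = x^{z}$ in the first integral of Theorem \ref{th:gammarec} is exactly the change of variables the paper intends (the corollary is stated without further proof), and your bookkeeping of the powers and the extra factor $\frac{1}{z}$ is correct. The added convergence check at both endpoints, treating the regularized integrand as a single entity, is a sound and worthwhile supplement to what the paper leaves implicit.
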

The latter result can be useful for computations with large arguments of $\Gamma$.

\begin{corollary}[Modified Euler Integral of the second kind]
	By change of variables it holds that for $z>0$
	\[
	\frac{1}{\Gamma(z)}  = \frac{1}{\pi} \int_{0}^{1}{\left. \frac{u-    \sum\limits_{k=0}^{n-1}{\left. \frac{{{(\log{u})}^{k}}}{k!}\right.} }{u\, {\left( \log{u}\right) ^{z}}}du\right.} = \frac{\sin {\pi z}}{\pi}\int_{0}^{1}{\left. \frac{1-  e_{n-1} \left( \log u \right) /u     }{ {\left( \log{1/u}\right) }^{z}}du\right.}
	\]
\end{corollary}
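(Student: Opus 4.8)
The plan is to obtain both identities directly from the two equivalent representations established in Theorem~\ref{th:gammarec} by exponential substitutions, so that no analytic input beyond that theorem is needed. Concretely, I would derive the second (sine) identity from the real form $\frac{1}{\Gamma(z)} = \frac{\sin\pi z}{\pi}\int_0^\infty \frac{e^{-x}-e_{n-1}(-x)}{x^z}\,dx$ via $u=e^{-x}$, and the first identity from the imaginary-part form $\frac{1}{\Gamma(z)} = \mathrm{Im}\,\frac{1}{\pi}\int_{-\infty}^0\frac{e^x-e_{n-1}(x)}{x^z}\,dx$ via $u=e^{x}$.

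For the second identity I set $u=e^{-x}$, whence $x=\log(1/u)$, $dx=-\,du/u$, and the range $x:0\to\infty$ becomes $u:1\to0$. Then $e^{-x}=u$, the truncated sum becomes $e_{n-1}(-x)=e_{n-1}(\log u)$, and $x^z=(\log(1/u))^z$. Flipping the limits to absorb the sign of $dx$ turns the integral into $\frac{\sin\pi z}{\pi}\int_0^1\frac{u-e_{n-1}(\log u)}{u\,(\log(1/u))^z}\,du$, and writing $\tfrac{u-e_{n-1}(\log u)}{u}=1-e_{n-1}(\log u)/u$ yields exactly the second stated form.

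For the first identity I instead set $u=e^{x}$ in the imaginary-part form, so $x=\log u$ (negative for $0<u<1$), $dx=du/u$, and $x:-\infty\to0$ becomes $u:0\to1$. This sends $e^x\mapsto u$, $e_{n-1}(x)\mapsto e_{n-1}(\log u)$ and $x^z\mapsto(\log u)^z$, producing $\mathrm{Im}\,\frac{1}{\pi}\int_0^1\frac{u-e_{n-1}(\log u)}{u\,(\log u)^z}\,du$, which is the first stated integral with the imaginary part understood exactly as in the theorem. To reconcile the two forms one records that, for $0<u<1$, the base $\log u$ is negative, so under the branch inherited from the Hankel cut one has $(\log u)^z=e^{\pm i\pi z}(\log(1/u))^z$; inserting this and using $\mathrm{Im}\,e^{\mp i\pi z}=\mp\sin\pi z$ together with Euler's reflection formula collapses the first integral onto the real sine integral of the second, precisely the bookkeeping already carried out in the proof of Theorem~\ref{th:gammarec}.

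The substitutions themselves are routine; the two delicate points are the branch of the non-integer power $(\log u)^z$ of the negative quantity $\log u$ --- this is what converts the complex factor $e^{\pm i\pi z}$ into the real prefactor $\sin\pi z$ and must be oriented consistently with the Hankel cut so that the sign comes out right --- and the convergence of the transformed improper integrals at the new endpoints $u\to0^+$ and $u\to1^-$. I expect the branch orientation to be the genuine obstacle, since an incorrect choice flips the sign of the result. The convergence, by contrast, is automatic: near $u=1$ one has $\log u\to0$ and the vanishing of $L_z$ for $n<z<n+1$ controls the integrand, while near $u=0$ one has $\log u\to-\infty$ and the vanishing of $M_z$ controls it; since $x\mapsto e^{\pm x}$ is a smooth bijection with nonvanishing derivative, integrability transfers directly from the integrals of Theorem~\ref{th:gammarec}.
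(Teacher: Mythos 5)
Your substitutions $u=e^{-x}$ and $u=e^{x}$ are exactly the changes of variables the corollary alludes to (the paper supplies no further proof), and both computations check out against the two forms in Theorem~\ref{th:gammarec}. Your remark that the first displayed integral must be read with the imaginary part and a branch of $(\log u)^z$ consistent with the Hankel cut --- the statement silently drops the $\mathrm{Im}$ that appears in the theorem --- is the correct reading, and correctly identifies the only delicate point.
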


Finally, it is instructive to demonstrate the correspondence between the complex-analytical representation and the hyper-singular representation.

\begin{theorem}[Regularized complex reciprocal Gamma representation]\label{th:comprep}
For		$z>0$, $z \notin \fclass{Z}{} $ and $n=[z]$
\[
\frac{1}{\Gamma(z)} = \frac{ 1}{2 \pi i}  \int_{Ha^{-}} \frac{e^{\tau} - e_{n-1} (\tau)}{\tau^z}\,  d\tau =  \frac{ \sin{\pi z}}{\pi} \int_{0}^{\infty} \frac{ e^{-\tau} - e_{n-1}(-\tau)}{\tau^{z}}  d \tau 
\]
where $\tau \in \fclass{R}{}$.
\end{theorem}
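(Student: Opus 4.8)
The plan is to treat the two equalities separately. The rightmost real integral is, after the cosmetic renaming $x\mapsto\tau$, exactly the representation already established in Theorem~\ref{th:gammarec}, so that equality carries no new content; the whole substance of the statement is the identification of the regularized Hankel integral with this real integral. I would obtain that identification by collapsing the contour $Ha^-$ onto the negative real semi-axis, and then invoke Theorem~\ref{th:gammarec} to close the loop.

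First I would note that the regularized integrand $\bigl(e^\tau-e_{n-1}(\tau)\bigr)/\tau^z$ is single-valued and analytic on the cut plane $\mathbb{C}\setminus(-\infty,0]$, so Cauchy's theorem lets me deform $Ha^-$ freely inside this region. The decisive gain from the regularization is the local behaviour at $0$: since $e^\tau-e_{n-1}(\tau)=\tau^n/n!+O(\tau^{n+1})$, the integrand is $O(\tau^{\,n-z})$ there, and because $n=[z]$ forces $n<z<n+1$, i.e.\ $-1<n-z<0$, the exponent exceeds $-1$. Hence the small circular arc of radius $\epsilon$ contributes a quantity of order $\epsilon^{\,n-z+1}\to 0$, so the circle may be shrunk to the origin — a step unavailable for the bare Heine integrand $e^\tau/\tau^z$ once $z>1$. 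I would then parametrize the two straight edges by $\tau=x\,e^{\mp i\pi}$ with $x>0$, observing that on both edges $e^\tau-e_{n-1}(\tau)=e^{-x}-e_{n-1}(-x)$, whereas $\tau^{-z}=x^{-z}e^{\pm i\pi z}$ carries opposite phases. Collecting the two contributions with the orientation of $Ha^-$ produces the factor $e^{i\pi z}-e^{-i\pi z}=2i\sin\pi z$, and dividing by $2\pi i$ gives precisely $\tfrac{\sin\pi z}{\pi}\int_0^\infty\bigl(e^{-x}-e_{n-1}(-x)\bigr)x^{-z}\,dx$.

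The convergence of these edge integrals at infinity must also be verified: as $x\to\infty$ the numerator is dominated by the polynomial $-e_{n-1}(-x)\sim -(-x)^{n-1}/(n-1)!$, so the integrand decays like $x^{\,n-1-z}$ with $n-1-z<-1$ by $z>n$, which is integrable — the same balance already encoded in the limits $L_z$ and $M_z$ of Theorem~\ref{th:gammarec}. With the collapsed integral equal to the real representation, Theorem~\ref{th:gammarec} completes the chain middle $=$ right $=1/\Gamma(z)$. The main obstacle is not any single estimate but the bookkeeping of the deformation: one must check that the regularization strips off exactly enough terms for the arc at the origin to vanish \emph{and} for the tails at $-\infty$ to converge, and one must follow the branch of $\tau^{-z}$ consistently across the cut so that the $\pm\pi$ arguments assemble with the correct sign into $2i\sin\pi z$. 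Finally I would remark that, measured against the classical Heine formula, this shows the polynomial part $e_{n-1}(\tau)$ is invisible to the complex representation, since the subtracted monomials yield scaleless integrals $\int_0^\infty x^{\,k-z}\,dx$ that vanish under analytic regularization; this is the precise sense in which the regularized and unregularized Hankel integrals coincide.
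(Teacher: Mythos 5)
Your proposal is correct and follows essentially the same route as the paper: collapse $Ha^-$ onto the cut, show the small arc vanishes because the regularized integrand is $O(\tau^{n-z})$ with $n-z>-1$, and combine the phases $e^{\pm i\pi z}$ from the two edges into $2i\sin\pi z$. The only (harmless) divergence is at the end, where the paper closes by an integration-by-parts reduction of $I_n(z)$ down to $n=0$ while you simply invoke Theorem~\ref{th:gammarec}; your explicit check of integrability at infinity and your sharper arc estimate are, if anything, cleaner than the paper's residue-limit argument.
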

 \begin{proof}
	The proof technique follows \cite{Luchko2008}. 
	We evaluate the line integral along the Hankel contour: 
	\[
	I_n (z)	= \frac{ 1}{2 \pi i}  \int_{Ha^{-}} \frac{e^{\tau} - e_{n-1} (\tau)}{\tau^z}\,  d\tau 
	\]
	with kernel
	\[
	Ker(\tau)= \frac{e^{\tau} - e_{n-1} (\tau)}{\tau^z}, \quad z>0
	\]
	The contour is depicted in Fig. \ref{fig:hankel}.
	The integral can be split in three parts
	\[
	\int_{Ha} Ker(\tau) d \tau = \int_{AB} Ker(\tau) d \tau + \int_{BCD} Ker(\tau) d \tau + \int_{DE} Ker(\tau) d \tau
	\]
	along the rays AB, DE and the arch BCD, respectively. 
	Along the ray AB where $\tau=r e^{i \delta}$ the kernel becomes
	\[
	Ker_A =  \frac{1 }{{r}^{z}}  \left( {{  e}^{{{ e}^{  i \delta} -  i \delta z} r}}-\sum\limits_{k=0}^{n-1}{\left. \frac{{{  e}^{  i \delta k -  i \delta z}}\, {{r}^{k}}}{k!}\right.}\right)  
	\]
	Along the ray DE where $\xi=r e^{ - i \delta}$ the kernel becomes
	\[
	Ker_B =   \frac{1 }{{r}^{z}}  \left( {{  e}^{{{ e}^{ - i \delta} +  i \delta z} r}}-\sum\limits_{k=0}^{n-1}{\left. \frac{{{  e}^{ - i \delta k +  i \delta z}}\, {{r}^{k}}}{k!}\right.}\right)  
	\]
	Therefore,
	\begin{multline*}
	Ker_A - Ker_B = \\-\frac{2 \, i}{{r}^{z}} \left( {{ e}^{\cos{(\delta)} r}}\, \sin{\left( \delta z-\sin{(\delta)} r\right) }-\left( \sum\limits_{k=0}^{n-1}{\left. \frac{\cos{\left( \delta k\right) }\, {{r}^{k}}}{k!}\right.}\right) \, \sin{\left( \delta z\right) }+\left( \sum\limits_{k=0}^{n-1}{\left. \frac{\sin{\left( \delta k\right) }\, {{r}^{k}}}{k!}\right.}\right) \, \cos{\left( \delta z\right) }\right)
	\end{multline*}
	Therefore,
	\[
	\llim{\delta}{\pi}{}\frac{1}{2 \pi i} \int_{\infty}^{0} (	Ker_A-Ker_B )\, dr= 
	\frac{\sin (\pi z)}{ \pi }\int_{0}^{\infty} \frac{e^{-z} - e_{n-1} (-r)}{r^z}   dr
	\]

	The integral on the arc BCD is given by the Cauchy Residue Theorem.
 	By the above observation  the residue at $\tau=0$  is given by the limit
 	\[
	 Res[ Ker] (\tau) = \llim{\tau}{0}{ {{\tau}^{1-z}}\, \left({e}^{\tau}- e_{n-1} (\tau)\right)  } = \llim{\tau}{0}{\tau L_z}=0, \quad z \leq n+1 
	 \]
	 since $L_z = 0$.
	 Therefore,
	 \[
	 \int_{BCD} Ker(\tau) d \tau = 0
	 \]
	 Furthermore, after integration by parts
	 \[
	 I_n(z)= - \left. \frac{e^{\tau} - e_{n-1} (\tau)}{\tau^{z-1}} \right|_{Ha^{-}} + \frac{1}{2 \pi i  (z-1) }  \int_{Ha^{-}} \frac{e^{\tau} - e_{n-2} (\tau)}{\tau^{z-1}}\,  d\tau = \frac{1}{ z-1} I_{n-1}(z)
	 \]
	 since $M_z=0$.
	 Therefore, the claim follows by reduction to $n=0$.
\end{proof}

\subsection{Applications}\label{sec:appli}
As a concrete application of Th. \ref{th:comprep} consider the  Laplace transform pair
\begin{flalign*}
\mathcal{L}_s :  f(t) &\div  F (s) \\
				 t^k & \div  \frac{\Gamma(k+1)}{s^{k+1}}, \quad k > 0
\end{flalign*}
The inverse Laplace transform can be calculated simply as as
\[
\mathcal{L}_t^{-1}: \frac{\Gamma(k+1)}{s^{k+1}} \quad \div \quad \frac{1}{2 \pi i} \int_{Ha^{-}} \Gamma(k+1)\, \frac{e^{t s} - e_{[k]} (t  s) } {s^{k+1}} ds = t^k
\]
by change of variables.

On a second place,  the ratio of two Gamma functions can be represented as 
\begin{proposition}
Let $A, B>0$. Then
\[
\frac{\Gamma(A)}{\Gamma(B)}  =\frac{1}{\pi} \int_{0}^{1}\int_{0}^{1}  \frac{1- e_{n-1} \left( \log u \right) /u}{ {\left( \log{u}\right) }^{B}} { \left( - \log{v}\right) }^{A-1} du \, dv
\]	
where $n=[B]$.
\end{proposition}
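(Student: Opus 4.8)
The plan is to recognize that the double integral factors as a product of two single integrals, one reproducing $\Gamma(A)$ and the other reproducing $1/\Gamma(B)$ via the Modified Euler Integral of the second kind established above. The integrand is visibly a product of a function of $u$ alone and a function of $v$ alone,
\[
\frac{1 - e_{n-1}(\log u)/u}{(\log u)^B}\,\cdot\,(-\log v)^{A-1},
\]
so that, once the interchange of integrations is justified, the double integral separates into
\[
\left(\int_{0}^{1} \frac{1 - e_{n-1}(\log u)/u}{(\log u)^B}\, du\right)\left(\int_{0}^{1} (-\log v)^{A-1}\, dv\right).
\]

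First I would dispatch the $v$-integral. The substitution $v = e^{-t}$, $dv = -v\, dt$, carries $(0,1)$ onto $(0,\infty)$ and turns it into the Euler integral of the second kind,
\[
\int_{0}^{1} (-\log v)^{A-1}\, dv = \int_{0}^{\infty} t^{A-1} e^{-t}\, dt = \Gamma(A),
\]
valid for $A>0$. Next I would invoke the Modified Euler Integral of the second kind, which after the substitution $x=\log u$ in Theorem \ref{th:gammarec} reads
\[
\frac{1}{\Gamma(B)} = \mathrm{Im}\,\frac{1}{\pi}\int_{0}^{1} \frac{1 - e_{n-1}(\log u)/u}{(\log u)^B}\, du, \qquad n = [B].
\]
Multiplying the two evaluated factors and absorbing the constant $1/\pi$ then yields $\Gamma(A)/\Gamma(B)$ directly, which is the claim.

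The main obstacle is the justification of Fubini, i.e.\ the verification that the product integrand is absolutely integrable on the unit square. Near $v=0$ and $v=1$ the factor $(-\log v)^{A-1}$ is integrable for exactly the reason $\Gamma(A)$ converges when $A>0$. The delicate endpoint is $u=1$, where $\log u \to 0$ and the kernel is hypersingular; there the numerator $1 - e_{n-1}(\log u)/u$ vanishes to the order forced by the choice $n=[B]$, which is precisely the cancellation that renders the single $u$-integral convergent in the corollary (compare the limits $L_z$ and $M_z$ computed in the proof of Theorem \ref{th:gammarec}). Since the two variables are completely decoupled, absolute integrability of the product follows from that of each factor, and the order of integration may be exchanged. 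A secondary point to flag is that $(\log u)^B$ is complex for $u\in(0,1)$, so the identity is to be read with the imaginary part as in Theorem \ref{th:gammarec}; because $(-\log v)^{A-1}$ is real and nonnegative, the operator $\mathrm{Im}$ commutes with the $v$-integration and the factorization above remains valid verbatim.
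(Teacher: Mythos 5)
Your proof is correct and takes essentially the same route as the paper's: both factor the double integral into the product of the $v$-integral (the Euler integral of the second kind, giving $\Gamma(A)$) and the $u$-integral (the Modified Euler Integral of the second kind, giving $1/\Gamma(B)$). Your additional remarks on Fubini and on reading $(\log u)^{B}$ through the imaginary part address details the paper leaves implicit.
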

\begin{proof}
\begin{multline*}
\frac{\Gamma(A+1)}{\Gamma(B)} =\frac{1}{\pi}  \int_{0}^{1}{\left. \frac{1-  e_{n-1} \left( \log u \right) /u     }{ {\left( \log{u}\right) }^{B}}du\right.} 
\int_{0}^{1}{ \left( - \log{v}\right) }^{A} dv= \\ 
\frac{1}{\pi} \int_{0}^{1}\int_{0}^{1}  \frac{1- e_{n-1} \left( \log u \right) /u}{ {\left( \log{u}\right) }^{B}} { \left( - \log{v}\right) }^{A} du \, dv
\end{multline*}	
\end{proof}

Finally, for negative arguments:
\begin{proposition}\label{th:neggamma}
	For $z>0$ it holds that
	\[
	\Gamma(-z) =- \frac{1}{z} \int_{0}^{\infty} \frac{ e^{-x} - e_{n-1}(-x)}{x^{z}}  dx
	\]
\end{proposition}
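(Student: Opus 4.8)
The plan is to derive the statement directly from the main theorem (Theorem~\ref{th:gammarec}) together with Euler's reflection formula (Eq.~\ref{eq:recgammaneg}), rather than evaluating the singular integral from scratch. The key realization is that the integral appearing here is \emph{exactly} the one already computed in Theorem~\ref{th:gammarec}, so the proposition is essentially a restatement of that result dressed up through the reflection identity.

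First I would abbreviate the hypersingular integral as
\[
J(z) := \int_{0}^{\infty} \frac{ e^{-x} - e_{n-1}(-x)}{x^{z}}  dx \ecma
\]
with $z>0$, $z \notin \fclass{Z}{}$ and $n=[z]$ as in Theorem~\ref{th:gammarec}, and then solve the conclusion of that theorem for the integral, obtaining
\[
J(z) = \frac{\pi}{\sin \pi z}\,\frac{1}{\Gamma(z)} \epnt
\]
This isolates the quantity I must control and frees me from any further analytic work on the integrand (convergence, the truncation of the exponential, or the limit computations $L_z$, $M_z$), all of which are already guaranteed by the hypotheses of Theorem~\ref{th:gammarec}.

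Next I would invoke the reflection formula (Eq.~\ref{eq:recgammaneg}) in the form $\frac{1}{\Gamma(-z)} = -\frac{\sin \pi z}{\pi}\,\Gamma(z+1)$ and apply the functional equation $\Gamma(z+1)=z\,\Gamma(z)$ to rewrite it as
\[
\Gamma(-z) = -\frac{\pi}{\sin \pi z}\,\frac{1}{z\,\Gamma(z)} \epnt
\]
Combining the two displays, I substitute $1/\Gamma(z) = \tfrac{\sin \pi z}{\pi}\, J(z)$ into this expression; the two $\sin \pi z$ factors and the two $\pi$ factors cancel, leaving $\Gamma(-z) = -\tfrac{1}{z}\, J(z)$, which is precisely the claim.

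The proof carries no genuine analytic obstacle, since all the delicate estimates have been absorbed into Theorem~\ref{th:gammarec}. The only point demanding care is the algebraic bookkeeping: one must align the $\sin \pi z$ factor produced by the theorem with the one supplied by the reflection formula, and correctly absorb the single factor of $z$ through $\Gamma(z+1)=z\,\Gamma(z)$ rather than through $\Gamma(z)$ directly. Getting either of these wrong would misplace a factor of $z$ or flip a sign, so I would verify the cancellation explicitly on a sample value such as $z=\tfrac12$ before concluding.
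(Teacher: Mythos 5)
Your proposal is correct and follows essentially the same route as the paper: both derive the result by combining Theorem~\ref{th:gammarec} with Euler's reflection formula (the paper writes it as $\Gamma(1-z)\Gamma(z)=\pi/\sin\pi z$ together with $\Gamma(1-z)=-z\,\Gamma(-z)$, which is algebraically identical to your use of Eq.~\ref{eq:recgammaneg} and $\Gamma(z+1)=z\,\Gamma(z)$). The cancellation of the $\sin\pi z$ and $\pi$ factors works out exactly as you describe.
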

\begin{proof}
	By the reflection formula
	\[
	\Gamma(1-z) \Gamma (z) = \frac{\pi}{\sin{ \pi z}} = -z \Gamma (-z) \Gamma (z)
	\]
	Therefore,
	\[
	\Gamma(-z) = -\frac{\pi  }{z \sin{\pi z}} \frac{1}{\Gamma(z)}=- \frac{1}{z} \int_{0}^{\infty} \frac{ e^{-x} - e_{n-1}(-x)}{x^{z}}  dx
	\]
\end{proof}
\begin{remark}
	The latter result is equivalent to the classical Cauchy–Saalsch\"utz integral representation.
	Indeed, by integration by parts
	\begin{multline*}
	\Gamma(-z) = - \frac{1}{z} \int_{0}^{\infty} \frac{ e^{-x} - e_{n-1}(-x)}{x^{z}}  dx = \frac{1}{z} \int_{0}^{\infty} \frac{ d (e^{-x} - e_{n}(-x))}{x^{z}}    = \\ \left. \frac{   e^{-x} - e_{n}(-x)}{x^{z}} \right|^\infty_0 - \frac{- z}{z} \int_{0}^{\infty} \frac{ e^{-x} - e_{n}(-x)}{x^{z+1}}  dx = 
	\int_{0}^{\infty} \frac{ e^{-x} - e_{n}(-x)}{x^{z+1}}  dx
	\end{multline*}
	which is the Cauchy–Saalsch\"utz integral. 
\end{remark}

\section{Numerical Implementation}\label{sec:impl}

A reference implementation in the Computer Algebra System Maxima is given in Listings  \ref{lst:recgamma} and \ref{lst:neggamma}.
The numerical integration code uses the library Quadpack, distributed with Maxima. 
The reference implementation given in this section uses a routine for semi-infinite interval integration with tunable relative error of approximation (i.e. the \textit{epsrel} parameter).
A plot of the reciprocal $\Gamma$ function computed from Listing \ref{lst:recgamma} is presented in Fig. \ref{fig:recgamapos1}.
\begin{figure}[hpt]
	\centering
	\includegraphics[width=0.7\linewidth]{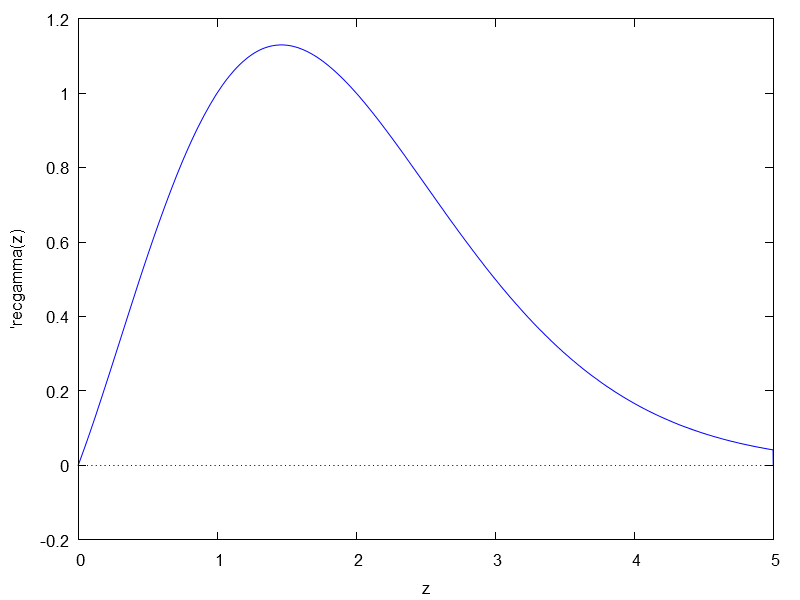}
	\caption{$1/ \Gamma(z)$ computed from Th. \ref{th:gammarec}}
	\label{fig:recgamapos1}
\end{figure}
\begin{figure}[hpt]
	\centering
	\includegraphics[width=0.7\linewidth]{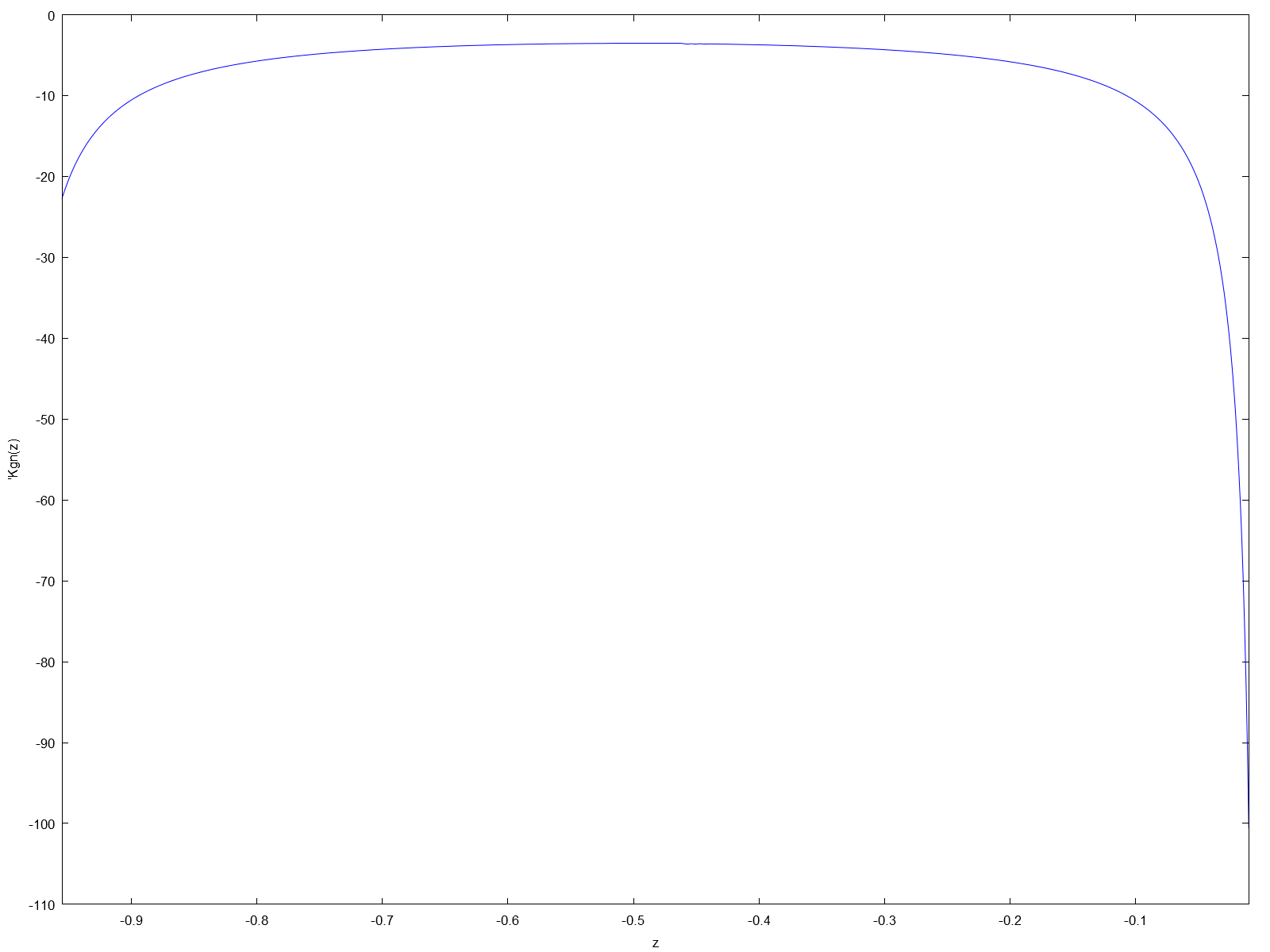}
	\caption{$\Gamma(-z)$ computed from Prop. \ref{th:neggamma}}
	\label{fig:recgamaneg1}
\end{figure}
Fig. \ref{fig:recgamaneg1} represents a plot of $\Gamma(-z)$ computed from Listing \ref{lst:recgamma}.

 \begin{lstlisting}[caption=The Maixma code corresponsing to Th. \ref{th:gammarec},
 label= lst:recgamma]
 
Kg(z):=block([ u, ret, k, n, fr:1, ss:0],
	if not numberp(z) then return('Kg(z)), 
	if integerp(z) then if z=0 then return(0) else return(1/(z-1)!)
	else (
		fr: sin(%pi*z)/%pi, 
		n:fix(z), 
		ss:sum( (-u)^k/k!,k,0, n-1),
		ret:fr*first(quad_qagi ( (exp(-u)-ss)/ u^(z) , u, 0, inf, 'epsrel=1d-8))
	),
	float(ret)
);
\end{lstlisting}

\begin{lstlisting}[caption=The Maixma code corresponsing to Corr. \ref{th:neggamma},
label= lst:neggamma]

Kgn(z):=block([ u, ret, k, n, fr:1, ss:0],
	if not numberp(z) then return('Kgn(z)), 
	if integerp(z) then   return(0)
	else (
		if z<0 then z:-z,
		fr:-1/z,
		n:fix(z), 
		ss:sum( (-u)^k/k!, k, 0, n-1),
		ret:fr*first(quad_qagi ( (exp(-u)-ss)/ u^(z) , u, 0, inf, 'epsrel=1d-8))
	),
	float(ret)
);
\end{lstlisting}

\section*{Acknowledgments}
The work has been supported in part by a grants from Research Fund - Flanders (FWO), contract number  VS.097.16N and the COST Association  Action CA16212 INDEPTH. 
Plots are prepared with the computer algebra system Maxima.

\bibliographystyle{plain}
\bibliography{biodiffusion1}

\end{document}